\newtheorem{dfn}{Definition}[section]
\newtheorem{thm}[dfn]{Theorem}
\newtheorem{lem}[dfn]{Lemma}
\def\beq{\begin{eqnarray*}}
\def\eeq{\end{eqnarray*}}
\def\beqn{\begin{equation}}
\def\eeqn{\end{equation}}
\def\1{{\bf 1}}
\def\v2{\vskip2mm}
\def\tst12{{\textstyle\frac12}}
\begin{document}
\date{}
\title{Last zero time or Maximum time of the winding number of Brownian motions}
\author{Izumi Okada}
\maketitle
\begin{abstract}
In this paper we consider  the winding number, $\theta(s)$, of planar Brownian motion and  study asymptotic behavior  of the process of  the maximum time, the time when $\theta(s)$ attains  the maximum  in the interval  $0\le s \le t$.  We find the limit law of its logarithm with a suitable normalization factor  and   the upper growth rate of  the maximum time process  itself.  We also show that the process of  the last zero time of  $\theta(s)$ in $[0,t]$ has the same law as the maximum time process.
\end{abstract}
\section{Introduction and Main results}
In this paper we seek for an analogue of the arcsine law of the linear Brownian motion for the argument of a complex Brownian motion $\{W(t)= W_1(t) +i W_2(t): t\ge0\}$ started at $W(0)= (1,0)$. 
Skew-product representation tells us that 
there exist two independent linear Brownian motions $\{B(t): t\ge0\}$  and $\{\hat{B}(t): t\ge0\}$ such that 
\begin{align}\label{skew}
W(t)=\exp (\hat{B}(H(t))+iB(H(t))) \text{ for all }t\ge0, 
\end{align}
where 
\begin{align*}
H(t)=\int_0^t \frac{ds}{|W(s)|^2}=\inf\{u\ge0: \int_0^u \exp(2\hat{B}(s))ds>t\},
\end{align*}
which entails that $B$ is independent of $|W|$ and hence of $H$, while   $\log |W|$ is time change of $\hat{B}$ (cf. e.g., \cite{Mo}, Theorem $7.26$). 

We let $\theta(t)=B(H(t))$ so that $\theta(t)=\arg W(t)$, which we call the winding number. 
Without loss of  generality  we suppose $\theta(0)=0$.   The well-known result of
 Spitzer \cite{sp} states  the  convergence of $2\theta(t)/ \log t$ in law:
\begin{align*}
\lim_{t \to \infty}P\bigg(\frac{2\theta(t)}{\log t}\le a \bigg)=\frac{1}{\pi}\int_{-\infty}^a\frac{dx}{1+x^2}. 
\end{align*}
It is shown in \cite{werner}  that for any increasing function $f:(0,\infty)\to(0,\infty)$
\begin{align}\label{oo}
\limsup_{t\to \infty} \frac{\theta(t)}{f(t)}  =0 \text{ or }\infty \quad \text{ a.s.}
\end{align}
according as the integral $\int^{\infty}\frac{1}{f(t)t}dt$ converges or diverges  and 
\begin{align*}
\liminf_{t\to \infty} \frac{1}{f(t)}\sup\{\theta(s),1\le s\le t\} =0 \text{ or }\infty \quad \text{ a.s.}
\end{align*}
according as the integral $\int^{\infty}\frac{f(t)}{t(\log t)^2}dt$  diverges or converges;
moreover, it is shown that the square root of the random time 
$H(t)$ is subjected to the same   growth law as  of $\theta$ in (\ref{oo}) and the $\liminf$ behavior of $H(t)$ is also given.    
 Another proof of  (\ref{oo}) is  given in \cite{shi}.
Also, it is shown in \cite{s}
\begin{align*}
\liminf_{t\to \infty} \frac{\log \log \log t}{\log t}\sup\{|\theta(s)|,1\le s\le t\} =\frac{\pi}{4} \quad \text{ a.s..}
\end{align*}

Before advancing  our result we recall the two arcsine laws whose analogues are studied in this paper.
Let  $\{B(t): t\ge0\}$  be  a standard linear Brownian motion started at zero and denote by $Z_t$ the time when the maximum of $B_s$ in the interval  $0\le s\le t$ is attained.  Then, 
the process  $Z_t$  and the process  $\sup\{s\in [0,t]: B(s)=0\}$, the last zero of Brownian motion in the time interval  $[0,t]$,  are subject to  the same law, and  
according to L\'{e}vy's arcsine law the scaled variable  $Z_t/t$ is subject to the arcsin law.  
 (cf. e.g., \cite{Mo} Theorem $5.26$ and $5.28$)

In order to state the results of this paper we set 
\begin{align}
V(a)=\frac{4}{\pi^2}\int\!\!\!\int_{0\le{y}\le a x}\frac{dx}{1+x^2}\frac{dy}{1+y^2}.
\end{align}
We also define a random variable $M_t\in[0,t]$ by
\begin{align*}
\theta(M_t)=\max_{s\in [0,t]}\theta(s),
\end{align*}
the time when $\theta(s)$ attains  the maximum  in the interval  $0\le s \le t$, 
and a random variable $L_t$ by 
 \begin{align*}
L_t=\sup\{s\in [0,t]: \theta(s)=0\},
\end{align*}
the last zero of   $\theta(s)$ in $[0,t]$.
According to Theorem $2.11$  of \cite{Mo} a linear Brownian motion attains its maximum at a single point  on each  finite interval with probability one. 
In view of the representation  $\theta(t)=B(H(t))$, it therefore follows that the maximiser $M_t$ is uniquely determined for all  $t$  with probability one. 
\begin{thm}\label{m1}
(a) For every  $0<a<1$
\begin{align*}
\lim_{t\to \infty} P\bigg(\frac{\log M_t}{\log t}\le a\bigg)=V\bigg(\frac{a}{1-a}\bigg).
\end{align*}

(b)  It holds that
\begin{align*}
\{L_t: t \ge 0 \}=_d \{M_t: t \ge 0 \}.
\end{align*}
\end{thm}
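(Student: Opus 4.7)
My plan is to use the skew-product representation $\theta(s)=B(H(s))$ throughout, which gives the clean identifications $M_t=H^{-1}(\tau^B(H(t)))$ and $L_t=H^{-1}(L^B(H(t)))$, where $\tau^B(T)$ and $L^B(T)$ denote the time of maximum and the last zero of $B$ on $[0,T]$, respectively. For (a), I would first derive an exact formula for $P(M_t\le t^a)$. Writing the event as $\{\max_{[0,t^a]}\theta\ge\max_{[t^a,t]}\theta\}$ and applying the Markov property at the deterministic time $t^a$, conditionally on $\hat B$ the shifted process $B'(v):=B(H(t^a)+v)-B(H(t^a))$ is a fresh standard Brownian motion independent of $B|_{[0,H(t^a)]}$. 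Setting $D=\max_{[0,H(t^a)]}B-B(H(t^a))$ and $E=\max_{[0,H(t)-H(t^a)]}B'$, the event $\{M_t\le t^a\}$ is precisely $\{D\ge E\}$, and the reflection principle gives, conditionally on $\hat B$, $D\overset{d}{=}\sqrt{H(t^a)}\,|N_1|$ and $E\overset{d}{=}\sqrt{H(t)-H(t^a)}\,|N_2|$ with $N_1,N_2$ independent standard normals independent of $\hat B$. Since $P(|N_1|/|N_2|\le x)=\tfrac{2}{\pi}\arctan x$, this produces the exact identity
\begin{align*}
P(M_t\le t^a)=1-\frac{2}{\pi}\,E\!\left[\arctan\sqrt{(H(t)-H(t^a))/H(t^a)}\,\right].
\end{align*}

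To take $t\to\infty$, I would use the Laplace-type estimate $\log\int_0^T e^{2\hat B(s)}\,ds=2\max_{[0,T]}\hat B+O(\log T)$ together with $H(t)=O((\log t)^2)$, giving $\max_{[0,H(s)]}\hat B=\tfrac12\log s+O(\log\log t)$ uniformly for $s\in\{t^a,t\}$. This sandwiches $H(t^a)$ and $H(t)$ between first-hitting times $\hat T_u:=\inf\{s:\hat B(s)=u\}$ of $\hat B$ at levels within $O(\log\log t)$ of $(a\log t)/2$ and $(\log t)/2$, respectively. Since $(\hat T_u)_{u\ge0}$ is a $\tfrac12$-stable subordinator with $\hat T_1\overset{d}{=}1/N^2$, Brownian scaling and independent increments yield
\begin{align*}
\left(\frac{H(t^a)}{(a\log t/2)^2},\,\frac{H(t)-H(t^a)}{((1-a)\log t/2)^2}\right)\xrightarrow{d}\left(\frac{1}{N_1^2},\,\frac{1}{N_2^2}\right)
\end{align*}
with $N_1,N_2$ i.i.d.\ $N(0,1)$, so that $\sqrt{(H(t)-H(t^a))/H(t^a)}\to\tfrac{1-a}{a}|N_1|/|N_2|$ in law. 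Because $\arctan$ is bounded and continuous, bounded convergence then gives $\lim_{t\to\infty}P(M_t\le t^a)=1-\tfrac{2}{\pi}E[\arctan(\tfrac{1-a}{a}X)]$ for $X=|N_1|/|N_2|$ half-Cauchy with density $2/[\pi(1+x^2)]$. A change of variables $x\mapsto 1/x$ combined with $\arctan y+\arctan(1/y)=\pi/2$ rewrites this expression as the double integral $V(a/(1-a))$.

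For (b), the representations above together with the independence $B\perp\hat B$ reduce the process-level claim to showing $(\tau^B(T))_{T\ge0}\overset{d}{=}(L^B(T))_{T\ge0}$ as processes. For this I would invoke L\'evy's theorem in process form: there is a standard Brownian motion $B'$ such that $(\max_{[0,s]}B-B(s))_{s\ge0}\overset{d}{=}(|B'(s)|)_{s\ge0}$. The zero sets of these equidistributed processes then agree in distribution as random closed subsets of $[0,\infty)$; the zero set of $\max B-B$ is the set of new-record times of $B$, whose supremum on $[0,T]$ is $\tau^B(T)$, while the zero set of $B'$ has supremum $L^{B'}(T)$. Consequently $\tau^B(\cdot)\overset{d}{=}L^{B'}(\cdot)\overset{d}{=}L^B(\cdot)$ as processes, and composing with the independent $H$ and inverting gives $\{M_t\}\overset{d}{=}\{L_t\}$ as processes.

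The principal technical obstacle is the sandwich step in (a): verifying quantitatively that the $O(\log T)$ error in the Laplace asymptotic for $\log\int_0^T e^{2\hat B(s)}ds$ translates, after normalization by $(\log t)^2$, to an $o(1)$ perturbation of the joint limit of $(H(t^a),H(t))$, so that the independent-increment scaling of the hitting-time subordinator dictates the limiting law. Once this is in place, the remainder of (a) is an elementary calculation and (b) follows directly from L\'evy's identity and the skew product.
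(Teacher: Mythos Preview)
Your proposal is correct and closely parallels the paper's argument. For (b), both reduce to L\'evy's process identity $\{\max_{[0,s]}B-B(s)\}_{s\ge0}=_d\{|B(s)|\}_{s\ge0}$ composed with the independent time change $H$. For (a), both start from $\{M_t\le t^a\}=\{\max_{[0,H(t^a)]}B-B(H(t^a))\ge\max_{[0,H(t)-H(t^a)]}B'\}$ and both pass to hitting times by a sandwich; only the packaging differs. The paper sandwiches $t^a$ and $t$ between first-passage times $T_r=\inf\{s:|W(s)|=r\}$ of the modulus, applies the strong Markov property of $W$ at $T_r$, and invokes the classical fact that $\theta(T_r)$ is Cauchy with scale $\log r$, landing directly on $V((a\pm\epsilon)/(1-a\mp 2\epsilon))$. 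You instead integrate out $B$ first to obtain the exact identity $P(M_t\le t^a)=1-\tfrac{2}{\pi}E\bigl[\arctan\sqrt{(H(t)-H(t^a))/H(t^a)}\,\bigr]$, then sandwich $H(t^a),H(t)$ between $\hat B$-hitting times $\hat T_u$ and use the $\tfrac12$-stable subordinator law with its independent increments. Since $H(T_r)=\hat T_{\log r}$, this is literally the same sandwich viewed from the $W$ side versus the $\hat B$ side; your ``principal technical obstacle'' is exactly the large-deviation input $P(T_{t^{(a-\epsilon)/2}}\le t^a\le T_{t^{(a+\epsilon)/2}})\to1$ that the paper cites, and your Laplace-type estimate for $\int_0^T e^{2\hat B}$ is a slightly roundabout rephrasing of it. Your route buys an exact pre-limit formula and makes the emergence of independent Cauchy variables transparent (as ratios $|N_i|/|\hat N_i|$ of independent normals from $B$ and $\hat B$); the paper's route is a little shorter because the Cauchy law of $\theta(T_r)$ already packages the reflection principle for $B$ and the subordinator law of $\hat T$ into one citable fact.
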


\begin{thm}\label{ili}
Let $\alpha(t)$ be a  positive  function  that is non-increasing, tends to zero  as $ t\to \infty$  and satisfies 
\begin{align}\label{assu}
2\alpha(t^e) \ge \alpha(t),
\end{align}
and put
\begin{align*}
I\{\alpha\}={\displaystyle \int^{\infty}\frac{\alpha(t) |\log \alpha(t)|}{t\log t}dt} .
\end{align*}
 Then, with probability one 
\begin{align*}
\liminf_{t\to \infty} \frac{M_t}{t^{\alpha(t)}}=\infty ~ \text{ or } ~  0 \quad 
\end{align*}
according as the integral 
$I\{\alpha\}$ converges  or diverges.
\end{thm}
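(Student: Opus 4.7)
The strategy is to combine Theorem \ref{m1}(a) with Borel--Cantelli along the geometric subsequence $t_k = \exp(\exp k)$, for which $\log\log t_k = k$ and $\int_{t_k}^{t_{k+1}}\!dt/(t\log t) = 1$. The starting analytic input is the short computation
\[
V(b) \sim \frac{4}{\pi^2}\, b\,|\log b| \qquad (b \to 0^+),
\]
proved by substituting $x=\tan\phi$ in $V(b)=(4/\pi^2)\int_0^\infty \arctan(bx)/(1+x^2)\,dx$ and splitting at $\phi^* = \arctan(1/b)$: on $[0,\phi^*]$ one has $\arctan(b\tan\phi)\asymp b\tan\phi$, contributing $\asymp b|\log b|$, while the complementary sliver of length $\asymp b$ contributes only $O(b)$. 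Combined with Theorem \ref{m1}(a) this yields $P(M_t\le t^a)\asymp a|\log a|$ for small $a$ and large $t$. The hypothesis (\ref{assu}) reads $\alpha(t_{k+1})\ge \alpha(t_k)/2$, which keeps $\alpha(t)|\log\alpha(t)|$ of bounded oscillation on each $[t_k,t_{k+1}]$, whence $I\{\alpha\}\asymp \sum_k \alpha(t_k)|\log\alpha(t_k)|$.

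For the convergence direction, if $t\in[t_k,t_{k+1}]$ satisfies $M_t\le t^{\alpha(t)}$, then the monotonicity $M_{t_k}\le M_t$ combined with $\alpha$ non-increasing and $t\ge 1$ gives $M_{t_k}\le t_{k+1}^{\alpha(t_k)} = t_k^{e\alpha(t_k)}$, an event of probability $\asymp \alpha(t_k)|\log\alpha(t_k)|$ by the first paragraph. The first Borel--Cantelli lemma then forces $M_t > t^{\alpha(t)}$ for all large $t$ almost surely. To upgrade this to $\liminf=\infty$ I would replace $\alpha(t)$ by $\alpha(t)+N/\log t$ for arbitrary fixed $N$; this multiplies the threshold by $e^N$ and changes $I\{\alpha\}$ only by the convergent integral $\int \log\log t/(t(\log t)^2)\,dt$, so the same argument delivers $M_t>e^N t^{\alpha(t)}$ eventually, for every $N$.

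The divergence direction is the harder half. The natural events $A_k=\{M_{t_k}\le t_k^{\alpha(t_k)}\}$ have $\sum_k P(A_k)\asymp I\{\alpha\}=\infty$ but are manifestly dependent. I would exploit the skew product $M_t=H^{-1}(N_{H(t)})$, with $N_v$ the maximiser of the independent linear Brownian motion $B|_{[0,v]}$, to localise the randomness: extract auxiliary events $\widetilde A_k$ determined by the increments of $(B,\hat B)$ on disjoint mesoscopic time windows around $H(t_k)$, chosen so that the arcsine variable $N_v/v$ and the exponential functional defining $H^{-1}$ decorrelate across well-separated scales (alternatively, one may appeal to a Kochen--Stone type estimate controlling pairwise correlations of the $A_k$). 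A second Borel--Cantelli argument then yields the infinitely-often statement. The principal obstacle is precisely this decorrelation: it requires simultaneous quantitative control of the exponential functional of $\hat B$ and of the arcsine law across widely separated scales, and the doubling hypothesis (\ref{assu}) is exactly what guarantees that summability survives the bounded-factor losses inherent in manufacturing independence.
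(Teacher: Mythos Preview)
Your overall architecture---Borel--Cantelli along $t_k=\exp(e^k)$ for the convergence half, Kochen--Stone for the divergence half---matches the paper's, and the asymptotic $V(b)\sim (4/\pi^2)\,b|\log b|$ is indeed the analytic input. But both halves have a genuine gap beyond what you flag.

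\textbf{Convergence half.} You invoke Theorem~\ref{m1}(a) to get $P(M_t\le t^a)\asymp a|\log a|$ ``for small $a$ and large $t$''. Theorem~\ref{m1}(a) is a limit for \emph{fixed} $a$; it says nothing along the diagonal $a=e\alpha(t_k)\to 0$. The paper does not treat Theorem~\ref{m1}(a) as a black box: it reruns the mechanism of that proof (hitting times $T_r$ of $|W|$, large-deviation bounds for $T_r$, the Cauchy law of $\theta(T_r)$) to obtain a non-asymptotic estimate
\[
P\big(M_t<q\,t^{4\alpha(t)}\big)\ \le\ V\!\Big(\tfrac{4\alpha(t)}{\tfrac12-5\alpha(t)}\Big)\;+\;c_1\exp\!\big(-t^{c_2\alpha(t)}\big),
\]
and then---crucially---replaces $\alpha(t)$ by $\alpha(t)\vee(\log\log t)^{-2}$ so that the error term $\exp(-t^{c_2\alpha(t)})$ is summable. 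Your shortcut skips both the explicit error and the lower truncation of $\alpha$; without them the summability claim is unjustified. (Your upgrade via $\alpha+N/\log t$ is fine in spirit; the paper instead proves the estimate for $q\,t^{4\alpha(t)}$ with arbitrary $q$, which is cleaner since $\alpha+N/\log t$ need not satisfy~(\ref{assu}).)

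\textbf{Divergence half.} Here you correctly identify Kochen--Stone as the tool and decorrelation as the obstacle, but the concrete device is missing. The paper's key idea is to abandon fixed times altogether and phrase the events in terms of the radial hitting times $T_r=\inf\{s:|W(s)|=r\}$: setting
\[
A_n=\Big\{\overline M\big[q\,t_n^{\alpha(t_n)},\,t_n\big]<T\big(q\,t_n^{2\alpha(t_n)}\big)\Big\},
\]
where $\overline M[a,b]$ is the maximiser of $\theta$ on $[T_a,T_b]$, one gets $P(A_n)=V\big(\alpha(t_n)/(1-2\alpha(t_n)+o(1))\big)$ exactly, and---since $\overline M[a,b]-T_a$ and $\overline M[c,d]-T_c$ are \emph{independent} whenever $b\le c$ by the strong Markov property---the covariance sum splits into a part with exact independence and a boundary part that is bounded by a geometric series (this is where~(\ref{assu}) enters, via $\alpha(t_k)\le 2\alpha(t_{k+1})$). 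Your sketch of ``disjoint mesoscopic time windows around $H(t_k)$'' does not achieve this: $H(t_k)$ is random and not a stopping time for the pair $(B,\hat B)$ in any useful filtration, so independence across windows does not follow. The passage from $A_n$ infinitely often back to $M_t/t^{\alpha(t)}$ small infinitely often then requires one more Borel--Cantelli step comparing $T_r$ to powers of $t$.
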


It may be worth noting that the distribution function  $V(a/(1-a))$ ($0\le a \le 1)$ is expressed as
 $$V\bigg(\frac{a}{1-a}\bigg) =\int_0^a \frac1{2u-1} \log \frac{u}{1-u} du.$$  
 Indeed, 
  $$V'(c)= \int_0^\infty \frac{x dx}{(1+x^2)(1+c^2 x^2)} = \frac{\log c}{c^2-1} ~~~(c\neq 1),$$
  where 
  \[
  \frac{d}{da}V\bigg(\frac{a}{1-a}\bigg) = \frac{1}{(1-a)^2}V'\bigg(\frac{a}{1-a}\bigg) \quad
(a\neq \frac{1}{2}),
  \]  
  and we find the density asserted above.

\section{Proofs}
\subsection{Proof of Theorem \ref{m1}}
Let $\{N(t): t\ge0\}$ be the maximum process of a winding number $\{ \theta(t): t\ge0\}$, 
i.e. the process defined by
\begin{align*}
N(t)=\max_{s\in[0,t]} \theta(s).
\end{align*}
\begin{lem}\label{ref2}
If $a>0$, then $P(N(t)>a)=2P(\theta(t)>a)=P(|\theta(t)|>a)$.
\end{lem}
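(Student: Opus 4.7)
The plan is to reduce the identity to the reflection principle for a standard linear Brownian motion, exploiting the skew-product representation.

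First I would observe that since the planar Brownian motion $W$ started at $(1,0)$ almost surely avoids the origin, the integrand $1/|W(s)|^2$ is locally bounded and strictly positive, so the clock $H(t)=\int_0^t ds/|W(s)|^2$ is continuous and strictly increasing. Consequently the image $\{H(s):0\le s\le t\}$ is exactly the interval $[0,H(t)]$, and therefore
\begin{align*}
N(t)=\max_{s\in[0,t]} B(H(s))=\max_{u\in[0,H(t)]} B(u).
\end{align*}

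Next I would use that, by the skew-product representation, $B$ is independent of $|W|$ and hence of $H$. Conditioning on $H(t)=h$, the maximum on the right above is the running maximum of the Brownian motion $B$ over the deterministic interval $[0,h]$, so the classical reflection principle gives
\begin{align*}
P\bigl(N(t)>a\,\bigm|\,H(t)=h\bigr)=P\Bigl(\max_{u\le h}B(u)>a\Bigr)=2P(B(h)>a)=P(|B(h)|>a).
\end{align*}
Integrating against the law of $H(t)$ and using independence of $B$ and $H$ once more, we obtain
\begin{align*}
P(N(t)>a)=2P(B(H(t))>a)=P(|B(H(t))|>a),
\end{align*}
which, together with $\theta(t)=B(H(t))$, is the claimed equality.

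There is no real obstacle: the only subtlety is verifying that the time change $H$ is a genuine continuous strictly increasing bijection $[0,t]\to[0,H(t)]$ so that the running maximum of $\theta$ coincides with the running maximum of $B$ over $[0,H(t)]$, and then invoking independence to apply the reflection principle conditionally. Symmetry of $B$ conditionally on $H$ also gives the second equality $2P(\theta(t)>a)=P(|\theta(t)|>a)$ for free.
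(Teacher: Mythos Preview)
Your proof is correct and follows essentially the same approach as the paper's: both reduce the claim to the classical reflection principle for $B$ via the independence of $B$ and the clock $H$. You are simply more explicit about why the time change maps the maximum of $\theta$ onto the maximum of $B$ over $[0,H(t)]$ and about the conditioning step, whereas the paper states the distributional identity $\max_{0\le l\le t}B(H(l))=_d|B(H(t))|$ directly.
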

\begin{proof}
By reflection principle \cite{Mo}, (Theorem $2.21$) it holds that for any $t>0$
\begin{align*}
\max_{0\le l \le t}B(l)=_d|B(t)|.
\end{align*}
By Skew-product representation $B(t)$ is independent of $|W(t)|$,  
hence since  $B(l)$ is independent of $H(t)=\int_0^{t} \frac{dm}{|W(m)|^2}$, 
it holds
\begin{align*}
\max_{0\le l \le t}B(H(l))=_d|B(H(t))|,
\end{align*}
showing the assertion of the lemma.
\end{proof}
\begin{lem}\label{l1}
$\{N(t)-\theta(t): t\ge0\}=_d \{|\theta(t)|: t\ge0 \}$.
\end{lem}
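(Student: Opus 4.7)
The plan is to reduce the claim to the classical Lévy identity for a single linear Brownian motion, namely that if $B$ is a standard Brownian motion started at $0$ and $\tilde M(u)=\max_{0\le s\le u}B(s)$, then, as processes,
\[
\{\tilde M(u)-B(u):u\ge0\}\;=_d\;\{|B(u)|:u\ge0\}.
\]
This is a standard consequence of the reflection principle (or of Skorokhod's lemma combined with the fact that $\tilde M$ is a local time at $0$ of $\tilde M-B$); I would simply cite it from Mörters--Peres.

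Next I would use the skew-product representation (\ref{skew}). Because $H$ is continuous and non-decreasing with $H(0)=0$, taking the running maximum commutes with the time change, i.e.\
\[
N(t)=\max_{s\in[0,t]}B(H(s))=\max_{u\in[0,H(t)]}B(u)=\tilde M(H(t)).
\]
Therefore $N(t)-\theta(t)=\tilde M(H(t))-B(H(t))$, while $|\theta(t)|=|B(H(t))|$. The remaining task is to transfer the Lévy identity through this random time change.

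To do so I would invoke the independence of $B$ and $H$ recorded after (\ref{skew}). Conditioning on the whole path of $H$, the map $\varphi\mapsto\varphi\circ H$ is a deterministic functional on path space. Applied to the two processes $\tilde M-B$ and $|B|$, which are equal in law by the Lévy identity, it yields two conditionally equidistributed processes; integrating out $H$ gives the unconditional identity
\[
\{\tilde M(H(t))-B(H(t)):t\ge0\}\;=_d\;\{|B(H(t))|:t\ge0\},
\]
which is exactly the statement of the lemma.

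The only delicate point is the measurability/joint-law bookkeeping in the time-change step: one should check that the Lévy identity, which is an equality of laws on $C([0,\infty))$, is preserved under composition with an independent continuous non-decreasing $H$. This follows from a standard monotone class argument applied to cylinder events, so no real obstacle arises; the bulk of the work is really just setting up the notation so that $N(t)=\tilde M(H(t))$ is visible.
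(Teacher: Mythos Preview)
Your proposal is correct and follows essentially the same route as the paper: invoke L\'evy's identity $\{\tilde M(u)-B(u)\}=_d\{|B(u)|\}$ for the driving Brownian motion, then push it through the time change $H$ using the independence of $B$ and $H$ from the skew-product representation. The paper compresses the time-change step into the phrase ``as in the preceding proof,'' whereas you spell out the conditioning argument and the fact that $N(t)=\tilde M(H(t))$ (via continuity and monotonicity of $H$); this extra care is fine but not a different method.
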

\begin{proof}
According to L\'{e}vy's representation of the reflecting Brownian motion \cite{Mo}, (Theorem $2.34$) we have 
\begin{align*}
\{\max_{0\le l \le t}B(l)-B(t):t\ge0\}=_d\{|B(t)|:t\ge0\}.
\end{align*}
Hence as in the preceding proof,
\begin{align*}
\{\max_{0\le l \le t}B(H(l))-B(H(t)):t\ge0\}=_d\{|B(H(t))|:t\ge0\},
\end{align*}
as desired.
\end{proof}

\begin{proof}[Proof of Theorem \ref{m1}]
Lemma \ref{l1} together with Lemma \ref{ref2} show that  the process $\{M_s: s \ge 0 \}$ has the same law as $\{L_s: s \ge 0 \}$, being nothing but the last zero of the process $\{N(t)-\theta(t):  0\le t\le s \}$ for any $s$. 
So it remains to prove part (a). 
Fix $a\in(0,1)$. 
Set $T_c=\inf\{l\ge0: |W(l)|=c\}$, for which we sometimes write $T(c)$ for typographical reasons. 
We first prove the upper bound. 
By (\ref{skew}) it holds that
\begin{align}\notag
P(M_t<t^a)
=&P(\max_{0\le u \le t^a}B(H(u))>\max_{t^a \le u\le t} B(H(u)))\\
\notag
=&P(\max_{0\le u\le t^a}B(H(u))-B(H(t^a))>\max_{t^a\le u \le t}B(H(u))-B(H(t^a)))\\
\label{again}
=&P(\max_{0\le u\le t^a}B(H(u))-B(H(t^a))>\max_{t^a\le u \le t}\tilde{B}(H(u))-\tilde{B}(H(t^a))),
\end{align}
where $\tilde{B}$ is a linear Brownian motion started at zero which is independent of $W$. 
Corresponding to (\ref{skew}) we can write 
 $\tilde{W}(0)=(1,0)$, $\arg \tilde{W}(l)=\tilde{B}(\tilde{H}(l))$, 
$\tilde{H}(l)=\int_0^l \frac{dm}{|\tilde{W}(m)|^2}$ with $\tilde{W}$ independent of $W$,  and put $\tilde{T}_c=\inf\{l\ge0: |\tilde{W}(l)|=c\}$. 
By Lemma \ref{ref2} and Lemma \ref{l1} we have  
$\max_{0\le u\le t^a}B(H(u))-B(H(t^a))=_d\max_{0\le u\le t^a}B(H(u))$, 
and therefore
\begin{align}\notag
&P(\max_{0\le u\le t^a}B(H(u))-B(H(t^a))>\max_{t^a\le u \le t}\tilde{B}(H(u))-\tilde{B}(H(t^a)))\\
\label{again1}
=&P(\max_{0\le u\le t^a}B(H(u))>\max_{t^a\le u \le t}\tilde{B}(H(u))-\tilde{B}(H(t^a))).
\end{align}
By standard large deviation result (cf. e.g., \cite{Law3}, $(11)$ and $(12)$), given $\epsilon>0$, it holds that for all sufficiently large $t$
\begin{align*}
P(t^a \le T_{t^{\frac{a+\epsilon}{2}}}, 
T_{t^{\frac{1-\epsilon}{2}}} \le t)\ge 1-\epsilon. 
\end{align*}
Therefore, we get
\begin{align}\notag
&P(\max_{0\le u\le t^a}B(H(u))>\max_{t^a\le u \le t}\tilde{B}(H(u))-\tilde{B}(H(t^a)))\\
\label{smpp}
\le &P(\max_{0\le u\le T(t^{\frac{a+\epsilon}{2}} )}B(H(u))>\max_{T(t^{\frac{a+\epsilon}{2}})\le u \le T(t^{\frac{1-\epsilon}{2}})}\tilde{B}(H(u))-\tilde{B}(H(T_{t^{\frac{a+\epsilon}{2}}})))+\epsilon.
\end{align}
Also, strong Markov property tells us
\begin{align*}
&\int_{T_{t^{\frac{a+\epsilon}{2}}}}^{T_{t^{\frac{1-\epsilon}{2}}}}\frac{dm}{|W(m)|^2}
=_d\int_0^{\tilde{T}_{t^{\frac{1-a-2\epsilon}{2}}}}\frac{dm}{|\tilde{W}(m)|^2},\\
\text{and  }& H(T_{t^{\frac{1-\epsilon}{2}}})-H( T_{t^{\frac{a+\epsilon}{2}}})
\text{ is independent of } H(T_{t^{\frac{a+\epsilon}{2}}}).
\end{align*}
So, if we set for $a,b<\infty$
\begin{align*}
Q(a,b)=P(\max_{0\le u\le T(a)}B(H(u))>\max_{0\le u \le \tilde{T}(b)}\tilde{B}(\tilde{H}(u))),
\end{align*}
it holds that
\begin{align}\label{smp}
P(\max_{0\le u\le T(t^{\frac{a+\epsilon}{2}} )}B(H(u))>\max_{T(t^{\frac{a+\epsilon}{2}})\le u \le T(t^{\frac{1-\epsilon}{2}})}\tilde{B}(H(u))-\tilde{B}(H(T_{t^{\frac{a+\epsilon}{2}}})))
= Q(t^{\frac{a+\epsilon}{2}},t^{\frac{1-a-2\epsilon}{2}} ).
\end{align}
Note that by Skew-product representation $B(t)$( resp. $\tilde{B}(t)$) is independent of $H(T_{t^{\frac{a+\epsilon}{2}}})$( resp. $\tilde{H}(\tilde{T}_{t^{\frac{a+\epsilon}{2}} })$). 
Then, if $\tilde{\theta}(l)=\tilde{B}(\tilde{H}(l))$, by reflection principle we get
\begin{align}\notag
Q(t^{\frac{a+\epsilon}{2}},t^{\frac{1-a-2\epsilon}{2}} )
&=P(|B(H(T_{t^{\frac{a+\epsilon}{2}} }))|>|\tilde{B}(\tilde{H}(\tilde{T}_{t^{\frac{1-a-2\epsilon}{2}}}))|)\\
\label{q1}
&=P(|\theta(T_{t^{\frac{a+\epsilon}{2}}})|>|\tilde{\theta}(\tilde{T}_{t^{\frac{1-a-2\epsilon}{2}}})|).
\end{align}
Moreover, since  $\theta(T_r)$ follows the Cauchy distribution with parameter $|\log r|$ 
(cf. e.g., \cite{yor}, Section $5$, Exercise $2.16$, 
\cite{v}, Proposition $2.3$, and
\cite{w}
), we get
\begin{align}\label{q2}
Q(t^{\frac{a+\epsilon}{2}},t^{\frac{1-a-2\epsilon}{2}} )
=P(|\theta(T_{t^{\frac{a+\epsilon}{2}}})|>|\tilde{\theta}(\tilde{T}_{t^{\frac{1-a-2\epsilon}{2}}})|)
=V(\frac{a+\epsilon}{1-a-2\epsilon}).
\end{align}
Therefore, since $\epsilon$ is arbitrary, this gives the desired upper bound. 

Next, we prove the lower bound. 
By standard large deviation result (cf. e.g., \cite{Law3}, $(11)$ and $(12)$), 
given $\epsilon>0$, it holds that 
for all sufficiently large $t$
\begin{align}
P(T_{t^{\frac{a-\epsilon}{2}}}\le t^a,
t\le T_{t^{\frac{1+\epsilon}{2}}} )\ge 1-\epsilon.
\end{align}
Moreover, by repeating the argument in (\ref{smpp}) and (\ref{smp}), we get
\begin{align*}
&P(\max_{0\le u\le t^a}B(H(u))>\max_{t^a\le u \le t}\tilde{B}(H(u))-\tilde{B}(H(t^a)))\\
\ge &Q(t^{\frac{a-\epsilon}{2}},t^{\frac{1-a+2\epsilon}{2}})-\epsilon.
\end{align*}
Therefore, repeating the arguments in (\ref{again}), (\ref{again1}), (\ref{q1}) and (\ref{q2}), we get
\begin{align*}
P(M_t<t^a)=
&P(\max_{0\le u\le t^a}B(H(u))>\max_{t^a\le u \le t}\tilde{B}(H(u))-\tilde{B}(H(t^a)))\\
\ge &Q(t^{\frac{a-\epsilon}{2}},t^{\frac{1-a+2\epsilon}{2}})-\epsilon\\
=&V(\frac{a-\epsilon}{1-a+2\epsilon})-\epsilon,
\end{align*}
yielding the lower bound.
\end{proof}
\subsection{Proof of Theorem \ref{ili}}
\begin{proof}[Proof of Theorem \ref{ili}]
We first prove $\liminf_{t\to \infty} M_t/t^{\alpha(t)}=
\infty$ if $I\{\alpha\}<\infty$. 
We may replace $\alpha(t)$ by $\alpha(t) \lor (\log \log t)^{-2}$. 
Indeed, if we set $$\tilde{\alpha }(t)=\alpha(t)1\{\alpha(t)>(\log \log t)^{-2}\}+(\log \log t)^{-2}1\{\alpha(t)\le(\log \log t)^{-2}\},$$  $I\{\tilde{\alpha}\}<\infty$. 
By standard large deviation result (cf. e.g., \cite{Law3}, $(11)$ and $(12)$)  for any $q<\infty$ there exist $0<c_1$, $c_2<\infty$ such that
\begin{align}\label{k1}
P(qt^{4\alpha(t)}\le T(t^{4\alpha(t)}), 
T(t^{\frac{1}{2}-\alpha(t)})\le t)\ge 1-c_1\exp(-t^{c_2\alpha(t)}). 
\end{align}
Therefore, by the same arguments as made for (\ref{again}), (\ref{again1}),  (\ref{smpp}), (\ref{smp}), (\ref{q1}) and  (\ref{q2}) we infer that for any $q<\infty$
\begin{align*}
P(M_t<qt^{4\alpha(t)})
=&P(\max_{0\le u\le qt^{4\alpha(t)} }B(H(u))-B(H(qt^{4\alpha(t)}))>\max_{qt^{4\alpha(t)}\le u \le t}\tilde{B}(H(u))-\tilde{B}(H(qt^{4\alpha(t)})))\\
\le &Q(t^{4\alpha(t)},t^{\frac{1}{2}-5\alpha(t)})+c_1\exp(-t^{c_2\alpha(t)})\\
=&V( \frac{4\alpha(t)}{\frac{1}{2}-5\alpha(t)})+c_1\exp(-t^{c_2\alpha(t)}).
\end{align*}
We set $t_n=\exp(e^n)$. 
Then, noting that $V(\alpha(n))\asymp \alpha(n)|\log \alpha(n)|$, we deduce from (\ref{k1}) that  for some $C<\infty$
\begin{align*}
P(M_{t_n}< t_n^{4\alpha(t_n)})
\le C\alpha(t_n)|\log \alpha(t_n)|+c_1\exp(-t_n^{c_2\alpha(t_n)}).
\end{align*}
The sum of the right-hand side over $n$ is finite 
since $\sum_{n=1}^{\infty}\alpha(t_n)|\log \alpha(t_n)|<\infty$ 
if $I\{\alpha\} <\infty$, 
and $\alpha(t)\ge (\log \log t)^{-2}$ 
according to our assumption. 
Thus, by Borel-Cantelli lemma for any $q<\infty$, with probability one
\begin{align}\label{p1}
\frac{M_{t_n}}{t_n^{4\alpha(t_n)}}>q\quad \text{for almost all }n.
\end{align}
Note that if we choose $t$ such that $t_n<t\le t_{n+1}$, 
then $t_n^{4\alpha(t_n)}>t^{\alpha(t)}$ 
and from (\ref{p1}) it follows that $M_t>M_{t_n}>qt^{\alpha(t)}$ for all sufficiently large $n$. 
Hence,
\begin{align*}
\liminf_{t\to \infty} \frac{M_t}{t^{\alpha(t)}}>q\quad a.s..
\end{align*}
Since $q<\infty$ is arbitrary, this concludes the proof. 

Next, we prove  $\liminf_{t\to \infty} M_t/t^{\alpha(t)}=0$ assuming that $I\{\alpha\}=\infty$. 
For any $a<b<\infty$, we set
\begin{align*}
\theta^*[a,b]=\max \{ \theta(t): T_a \le t \le T_b \},
\end{align*}
and define $\overline{M}[a,b]$ via 
\begin{align*}
\theta(\overline{M}[a,b])=\theta^*[a,b]
\quad \text{ and }T_a \le \overline{M}[a,b] \le T_b.
\end{align*}
Recall we have set $t_n=\exp(e^n)$. 
For $q>0$, denote by $A_n$ the event  
$$\overline{M}[qt_n^{\alpha(t_n)}, t_n]<T(qt_n^{2\alpha(t_n)}).$$ 
Bringing in the set $D=\{n \in \mathbb{N} :\alpha(t_n)>\frac{1}{(\log \log t_n)^2}\}$, we shall prove $\sum_{n=1, n\in D}^{\infty} P(A_n)=\infty$ and 
\begin{align}\label{jj}
\liminf_{n\in D,n \to \infty} \frac{\sum_{j=1,j \in D}^n \sum_{k=1,k \in D}^n P(A_j \cap A_k)}
{(\sum_{j=1, j\in D}^n P(A_j))^2}<\infty,
\end{align}
which together imply 
$P(\limsup_{n\in D,n \to \infty} A_n )=1$ 
according to the Borel-Cantelli lemma (cf. \cite{sp1}, p.$319$ or \cite{lam}) and  Kolmogorov's $0-1$ law. 
First we prove $\sum_{n=1,n \in D}^{\infty} P(A_n)=\infty$. 
Note that it holds that for $0<a<b<c$
\begin{align*}
P(\theta^*[a,b]>\theta^*[b,c])
=P(\theta^*[1,\frac{b}{a}]>\theta^*[\frac{b}{a},\frac{c}{a}]).
\end{align*}
Thus,
\begin{align*}
P(\theta^*[qt^{\alpha(t)},qt^{2\alpha(t)}]>\theta^*[qt^{2\alpha(t)},t])
=P(\theta^*[1,t^{\alpha(t)}]>\theta^*[t^{\alpha(t)},\frac{1}{q}t^{1-\alpha(t)}]).
\end{align*}
Therefore, we get by the same argument as employed for (\ref{again}), (\ref{again1}),  (\ref{smpp}), (\ref{smp}), (\ref{q1}) and (\ref{q2})
\begin{align}\notag
&P(\overline{M}[qt^{\alpha(t)}, t]<T(qt^{2\alpha(t)}))\\
\notag
=&P(\theta^*[1,t^{\alpha(t)}]>\theta^*[t^{\alpha(t)},\frac{1}{q}t^{1-\alpha(t)}])\\
\notag
=&P(\max_{ u \le T(t^{\alpha(t)})}B(H(u))-B(H(T(t^{\alpha(t)})))>\max_{T(t^{\alpha(t)})\le u\le 
T(\frac{1}{q}t^{1-\alpha(t)})} \tilde{B}(H(u))-\tilde{B}(H(T(t^{\alpha(t)}))))\\
\notag
= &Q(t^{\alpha(t)},\frac{1}{q}t^{1-2\alpha(t)})\\
\label{t1}
=&V(\frac{\alpha(t)}{1-2\alpha(t)-(\log t \log q)^{-1}}).
\end{align}
Moreover, using $V(\alpha(n))\asymp \alpha(n)|\log \alpha(n)|$ again, 
we get for some $C>0$
\begin{align*}
P(A_n)
\ge C\alpha(t_n)|\log \alpha(t_n)|.
\end{align*} 
It holds that $\sum_{n\in D}\alpha(t_n)|\log \alpha(t_n)|= \infty$ 
 if $I\{\alpha\}=\infty$, 
since 
 $\sum_{n\notin D}\alpha(t_n)|\log \alpha(t_n)|< \infty$. 
So we get $\sum_{n\in D} P(A_n)=\infty$. 

Next we prove (\ref{jj}).  
We only need to consider $\sum_{j=1,j \in D} \sum_{k<j, k \in D} P(A_j \cap A_k)$. 
First we consider $\sum_{j=1,j \in D}^n \sum_{k \in R_{k,j} ,k \in D} P(A_j \cap A_k)$ where $R_{k,j}=\{k: qt_j^{\alpha(t_j)}\ge  t_k\}$.  
Note that for  $a<b\le c<d<\infty$
\begin{align}
\label{t2}
\overline{M}[a,b]-T_a \text{ is independent of } \overline{M}[c,d]-T_c.
\end{align}
Then, since $qt_k^{\alpha(t_k)}<t_k \le qt_j^{\alpha(t_j)}< t_j$ 
when $k$ is satisfied with $ qt_j^{\alpha(t_j)}\ge  t_k$, 
it holds that 
\begin{align}\label{ind}
P(A_j \cap A_k)=P(A_j)P(A_k).
\end{align} 
So, next we consider the case $qt_j^{\alpha(t_j)}< t_k$. 
We denote by $A'_{k,j}$ 
the event $\overline{M}[qt_k^{\alpha(t_k)},qt_j^{\alpha(t_j)}]<T(qt_k^{2\alpha(t_k)})$. 
Note that when $k$ is satisfied with $qt_j^{\alpha(t_j)}< t_k$, 
we have  
 $A_k \subset A'_{k,j}$, and by (\ref{t2}) $P(A_j\cap A'_{k,j})=P(A_j)P(A'_{k,j})$. 
Then, since by the same argument for (\ref{t1}) $P(A'_{k,j})=V(\frac{e^k\alpha(t_k)}{e^j\alpha(t_j)-e^k\alpha(t_k)})$,
 we get
\begin{align}\label{x1}
P(A_j \cap A_k)\le P(A_j\cap A'_{k,j})=P(A_j)P(A'_{k,j})=P(A_j) V(\frac{e^k\alpha(t_k)}{e^j\alpha(t_j)-e^k\alpha(t_k)}).
\end{align}
Furthermore, since $\alpha(t_k)\le 2\alpha(t_{k+1})$ due to the assumption (\ref{assu}), we get
\begin{align}\notag
&\sum_{ k\in R_{k,j}^c,k<j, k \in D} P(A'_{k,j})
=\sum_{ k \in R_{k,j}^c,k<j,k \in D} V(\frac{e^k\alpha(t_k)}{e^j\alpha(t_j)-e^k\alpha(t_k)})\\
\label{x2}
&\le\sum_{k=1}^{\infty} V(\frac{2^k}{e^k-2^k})
\le C\sum_{k=1}^{\infty}{(\frac{e}{2})}^{-k}\le C',
\end{align}
where $R_{k,j}^c=\{k: qt_j^{\alpha(t_j)}<  t_k\}$. 
So, by (\ref{x1}) and (\ref{x2}) we get 
$\sum_{j=1,j \in D}^n \sum_{k \in R_{k,j}^c,k \in D} P(A_j \cap A_k)
\le C \sum_{j=1,j \in D}^nP(A_j)$.  
Combined with (\ref{ind}) this shows 
$$\sum_{j=1,j \in D}^n \sum_{k\le j,k\in D}^n P(A_j \cap A_k) \le \sum_{j=1,j\in D}^n \sum_{k\le j,k\in D}^n P(A_j) P(A_k)+C'\sum_{j=1,j\in D}^nP(A_j) ,$$
 completing the proof  of (\ref{jj}). 
Therefore, we can conclude that with probability one
\begin{align}\label{o1}
\overline{M}[qt_n^{\alpha(t_n)}, t_n]<T(qt_n^{2\alpha(t_n)})\quad \text{ infinitely often for }n \in D.
\end{align}
On the other hand, by standard large deviation result (cf. e.g., \cite{Law3}, $(11)$ and $(12)$) 
there exist $0<c_3$, $c_4<\infty$ such that
\begin{align*}
P( T(qt^{2\alpha(t)})\le qt^{5\alpha(t)}, 
t^{\frac{1}{4}} \le T_t)\ge 1-c_3\exp(-c_4t^{\alpha(t)}). 
\end{align*}
Moreover, $\sum_{n\in D} c_3\exp(-c_4t_n^{\alpha(t_n)})<\infty$. 
Then, by Borel-Cantelli lemma it holds that with probability one
\begin{align}\label{o2}
T(qt_n^{2\alpha(t_n)})\le  qt_n^{5\alpha(t_n)}, \quad
M_{t_n^{\frac{1}{4}}}  \le \overline{M}[qt_n^{\alpha(t_n)}, t_n],
\quad \text{for almost all }n\in D.
\end{align}
So, by (\ref{o1}) and (\ref{o2}) it holds that
\begin{align*}
\liminf_{t\to \infty} \frac{M_t}{qt^{20\alpha(t)}}
\le \liminf_{n \in D,n\to \infty} \frac{M_{t_n}}{qt_n^{20\alpha(t_n)}}
\le \liminf_{n \in D,n\to \infty} \frac{M_{t_n^{\frac{1}{4}}} }{qt_n^{5\alpha(t_n)}}
\le \liminf_{n \in D,n\to \infty} \frac{\overline{M}[qt_n^{\alpha(t_n)}, t_n]}{T(qt_n^{2\alpha(t_n)})}
<1 \quad a.s..
\end{align*}
The proof finishes since $q>0$ is arbitrary by replacing $\alpha(t)$ by $\frac{\alpha(t)}{20}$. 
\end{proof}

\end{document}